
\documentclass[12pt]{amsart}

\usepackage{amsmath,amssymb,amsmath,epic,lscape}

\newtheorem{theorem}{Theorem}[section]
\newtheorem{proposition}[theorem]{Proposition}

\theoremstyle{definition}

\theoremstyle{remark}

\numberwithin{equation}{section}

\def\DJ{{\hbox{D\kern-.8em\raise.15ex\hbox{--}\kern.35em}}}
\def\DJo{$\;$\kern-.4em
    \hbox{D\kern-.8em\raise.15ex\hbox{--}\kern.35em okovi\'c}}

\renewcommand{\subjclassname}{\textup{2000} Mathematics Subject
Classification }

\begin{document}

\title[Small orders of Hadamard matrices]
{Small orders of Hadamard matrices and base sequences}

\author[D.\v{Z}. \DJo]
{Dragomir \v{Z}. \DJo}
\address{Department of Pure Mathematics and Institute for Quantum Computing, University of Waterloo,
Waterloo, Ontario, N2L 3G1, Canada}
\email{djokovic@uwaterloo.ca}

\keywords{Hadamard matrices, base sequences, T-sequences,
orthogonal designs, Williamson-type matrices}

\date{}

\begin{abstract}
We update the list of odd integers $n<10000$ for which an Hadamard 
matrix of order $4n$ is known to exist. We also exhibit the first 
example of base sequences $BS(40,39)$. Consequently, there
exist T-sequences $TS(n)$ of length $n=79$. The first undecided 
case has the length $n=97$.
\end{abstract}

\maketitle
\subjclassname{ 05B20, 05B30 }
\vskip5mm

\section{Introduction}

We ask a very simple question: For which odd positive integers
$n<10000$ is it known how to construct an Hadamard matrix of
order $4n$? We shall refer to such $n$ (in this range) as 
{\em good} integers, and to other as ``bad''. 
Unfortunately, in spite of the fact that the Hadamard matrix 
conjecture is very old and constitutes a very active area of
current research in combinatorics, the answer to this question
is apparently not known. As a tentative answer we choose 
\cite[Table 1.53]{HCD}. In fact this table
is more ambitious as it also provides the least exponent $t$
for which it is known that an Hadamard matrix of order $2^t n$
exists. We have qualified this answer as tentative for
two reasons. First of all the table has been published three
years ago and needs to be updated. For instance we have 
constructed Hadamard matrices of order 764 (see \cite{DZ5}).
Secondly, the information contained in the table was not accurate
even at the time of its publication. Indeed, in our note
\cite{DZ6}, we have given a list of 138 good values of $n$, 
which have not been recorded in the table. In the same note, 
by using new results, we have shown that 4 additional values 
of $n$ are good.

We can now replace the above question with two simpler
questions. First, is it true that the integers $n$ asserted to 
be good in \cite[Table 1.53]{HCD} are indeed good? They probably 
are (and we continue to consider them as good) but I admit that
I was not able to verify this assertion in all cases since the
references provided are not sufficient and the 
literature on this subject is enormous. The tables in the old 
survey paper \cite{SY2} are much better in that regard as they 
include the information necessary for the construction of 
tabulated matrices. The second simplified question is:
can we convert some of the bad integers into good ones? We shall 
address only the latter question in this note.

We can state our main result simply by saying that we have
converted 42 bad integers into good ones. (Most of them were
good even three years ago.) Originally, i.e., according to
\cite[Table 1.53]{HCD} there were 1006 bad integers. The update 
in \cite{DZ6} reduced this number to 864, and here
we reduce it further to 822.

We refer the reader to \cite{HCD,SY2} and to our note 
\cite{DZ6} for the standard definitions and notation. 
As in that note, we shall write $OD(4d)$ for the orthogonal 
design $OD(4d;d,d,d,d)$. It is now known that T-sequences 
of length $d$ exist, and consequently $OD(4d)$ exist, for 
all $d\le100$ except possibly for $d=97$. For this see the 
next section where we recall some old results and present 
a new one. We use these results later to construct some 
particular Hadamard matrices that we need.

\section{Tools for the construction}

Our objective is to show how one can construct Hadamard 
matrices of order $4n$ for the following 42 odd integers
$n$:

\begin{eqnarray*}
&& 787, 823, 883,1063,1303,1527,2143,2335,2545,2571, \\
&& 3533,4285,5441,5449,5999,6181,6617,6819,7167,7179, \\
&& 7251,7323,7663,7779,8067,8079,8139,8187,8237,8259, \\
&& 8499,8573,8611,8653,8751,8859,9111,9123,9427,9627, \\
&& 9671,9939.
\end{eqnarray*}
(According to \cite[Table 1.53]{HCD} they are all bad.)
This list does not overlap with the list of 142 good 
numbers in \cite{DZ6}.

The construction is based on the following old results and
on a new result that we will mention afterwards.

First, we need Mathon's theorem about symmetric conference 
matrices. Recall that a square matrix $C$ of order $n$ is 
called a {\em conference matrix} if its diagonal entries are 0,
its off-diagonal entries are $\pm1$, and $CC^T=(n-1)I_n$,
where $T$ denotes transposition and $I_n$ is the identity matrix. 
If a conference matrix is symmetric, its order $n$ must be 1 
or $\equiv 2 \pmod{4}$.

\begin{theorem} \rm{( Mathon \cite{RM} )} If $q\equiv 3\pmod{4}$
is a prime power and $q+2$ is a prime power, then there exists
a symmetric conference matrix of order $q^2(q+2)+1$ and a 
symmetric Hadamard matrix of order $2q^2(q+2)+2$.
\end{theorem}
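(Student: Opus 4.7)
The plan is to establish the two assertions in turn. First I would construct a symmetric conference matrix $C$ of order $N := q^2(q+2)+1$; then I would derive the symmetric Hadamard matrix of order $2N$ from $C$ by a standard doubling construction.

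For the conference matrix, since $q\equiv 3\pmod{4}$ is a prime power, the Paley construction in $\mathrm{GF}(q)$ yields a skew-symmetric conference matrix of order $q+1$. Since $q+2\equiv 1\pmod{4}$ is also a prime power, the Paley construction in $\mathrm{GF}(q+2)$ yields a symmetric conference matrix of order $q+3$. Mathon's idea (carried out in \cite{RM}) is to index an $N\times N$ matrix by an extra ``point at infinity'' together with $q^2(q+2)$ points naturally attached to $\mathrm{GF}(q)^2\times\mathrm{GF}(q+2)$, and to define its entries in terms of the Legendre-type symbols in the two fields according to a composition pattern that encodes both Paley matrices. The verification that the resulting matrix is symmetric with zero diagonal and $\pm1$ off-diagonal entries, and that its rows are pairwise orthogonal of squared norm $N-1$, reduces to standard quadratic character sum identities in $\mathrm{GF}(q)$ and $\mathrm{GF}(q+2)$.

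For the second assertion, once $C$ is in hand (so $C=C^T$, $C$ has zero diagonal and $\pm1$ off-diagonal entries, and $C^2=(N-1)I_N$), I would form the block matrix
\[
H \;=\; \begin{pmatrix} C+I_N & C-I_N \\ C-I_N & -C-I_N \end{pmatrix}.
\]
A direct block computation using $C^T=C$ and $C^2=(N-1)I_N$ shows that $H=H^T$, that each diagonal block of $HH^T$ equals $2N\cdot I_N$, and that the off-diagonal blocks vanish. Hence $H$ is a symmetric Hadamard matrix of order $2N=2q^2(q+2)+2$.

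The main obstacle is the conference matrix composition itself: it is there that the arithmetic hypotheses $q\equiv 3\pmod{4}$ and ``$q+2$ a prime power'' are both used, via the distinct symmetry behaviors of the Paley matrices attached to the two fields, and it is there that the symmetry of the assembled $N\times N$ matrix (as opposed to the skewness one would obtain from a more naive Paley-type recipe on $q^2(q+2)+1$ points) must be engineered by a careful choice of the composition rule. The Hadamard doubling step above is then routine linear algebra.
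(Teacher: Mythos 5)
The paper does not prove this theorem at all: it is quoted from Mathon \cite{RM} and used as a black box, so there is no internal proof to compare against. Of the two parts of your proposal, the second is correct and complete. Given a symmetric conference matrix $C$ of order $N$ (so $C=C^T$, zero diagonal, $\pm1$ off-diagonal, $C^2=(N-1)I_N$), the block matrix
\[
H=\begin{pmatrix} C+I_N & C-I_N\\ C-I_N & -C-I_N\end{pmatrix}
\]
is visibly symmetric, has all entries $\pm1$ precisely because the diagonal of $C$ is zero, and a direct computation gives $H^2=2NI_{2N}$ (the diagonal blocks are $2C^2+2I_N=2NI_N$ and the off-diagonal blocks are $(C^2-I_N)-(C^2-I_N)=0$). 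This is the standard doubling and is fine.

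The genuine gap is in the first part, which is the entire content of Mathon's theorem. You never actually define the $N\times N$ matrix: you name an index set of size $q^2(q+2)+1$ and assert that the entries are given by ``a composition pattern that encodes both Paley matrices,'' with the orthogonality check ``reducing to standard quadratic character sum identities.'' With no composition rule written down there is nothing to verify, and this case is not a routine Paley argument: $q^2(q+2)+1$ is in general not of the form $p^k+1$ (already for $q=3$ the order is $46$ and $45$ is not a prime power), which is exactly why Mathon's construction is a separate, nontrivial theorem. His matrix is assembled from auxiliary block matrices built over the two fields, and proving that the rows are orthogonal is the whole difficulty. As written, your argument for the existence of the conference matrix is a citation of \cite{RM} presented as a sketch --- which leaves the theorem in the same unproved-but-referenced state it has in the paper, rather than supplying a proof.
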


Next, we need three theorems of Yamada which we compress into two.

\begin{theorem} \rm{( Yamada \cite{MY} )} Let $q\equiv 1\pmod{8}$
be a prime power. 

(a) If there exists an Hadamard matrix of order $(q-1)/2$, then
there exists an Hadamard matrix of order $4q$.

(b) If there exists a symmetric conference matrix of order 
$(q+3)/2$, then there exists an Hadamard matrix of order 
$4(q+2)$.
\end{theorem}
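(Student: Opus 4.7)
\medskip
\noindent\textbf{Proof plan.}
The plan is to exploit the fact that $q\equiv 1\pmod 8$ makes the eight-class cyclotomy of $\mathbb{F}_q$ available. Fix a primitive root $g$ of $\mathbb{F}_q$ and, writing $q-1=8f$, let $C_i=\{g^{i+8j}:0\le j<f\}$ for $0\le i\le 7$ be the cyclotomic classes of index~$8$. Using the classical explicit formulas for the cyclotomic numbers $(i,j)_8$ (which depend on the representation $q=a^2+2b^2$ with $a\equiv 1\pmod 4$), select four subsets $D_1,D_2,D_3,D_4\subset \mathbb{Z}_q$, each a union of some $C_i$'s together possibly with $\{0\}$, whose $\pm 1$ characteristic vectors give circulant matrices $A_1,A_2,A_3,A_4$ of order $q$ satisfying
$$ \sum_{j=1}^{4} A_j A_j^T \;=\; 4q\,I_q \;+\; \gamma J_q $$
for a specific small integer $\gamma$ controlled by the row sums of the $A_j$.

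\medskip
For part (a), the residual $\gamma J_q$ must be removed. The given Hadamard matrix $H$ of order $(q-1)/2$ is used to manufacture four auxiliary $(\pm 1)$-sequences of length $(q-1)/2$ whose combined aperiodic autocorrelations exactly cancel the $\gamma J_q$ contribution and whose row sums match the required defect. Plugging $A_1,\ldots,A_4$ together with the sequences derived from $H$ into the Goethals--Seidel array then produces a $\pm 1$ matrix of order $4q$ whose Gram matrix is $4q\,I_{4q}$, i.e.\ an Hadamard matrix of order $4q$. For part (b), the same four circulants are bordered up from order $q$ to order $q+2$; the symmetric conference matrix of order $(q+3)/2$ supplies the border, and its symmetry is essential because the bordered blocks must slot into the Goethals--Seidel array in a way compatible with the transpose structure of that array. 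A parallel verification then gives an Hadamard matrix of order $4(q+2)$.

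\medskip
The technical heart of the argument — and where I expect the main obstacle — is the first step: isolating unions of the eight cyclotomic classes that yield exactly the balance $4qI_q+\gamma J_q$ with a value of $\gamma$ small enough to be absorbed by an Hadamard matrix of order $(q-1)/2$ in case (a), or by a symmetric conference matrix of order $(q+3)/2$ in case (b). This is precisely where the hypothesis $q\equiv 1\pmod 8$ is used in full strength, since milder congruences $q\equiv 1\pmod 4$ produce only four cyclotomic classes and do not allow enough freedom to tune $\gamma$. Once the correct unions $D_1,\ldots,D_4$ are named, the verification reduces to bookkeeping with the known order-$8$ cyclotomic numbers, and the remainder of the argument is a routine application of the Goethals--Seidel array.
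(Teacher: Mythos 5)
This theorem is quoted by the paper from Yamada \cite{MY} without proof, so there is no in-paper argument to compare against; judged on its own, your text is a proof \emph{plan} rather than a proof, and the plan omits exactly the step that constitutes the theorem. You never specify which unions $D_1,\dots,D_4$ of the eight cyclotomic classes are to be taken, never compute the order-$8$ cyclotomic numbers that would establish $\sum_j A_jA_j^T=4qI_q+\gamma J_q$, and never determine $\gamma$; you yourself flag this as ``where I expect the main obstacle.'' Since every subsequent step is declared routine \emph{conditional} on that choice, nothing has been established. A citation-level restatement of ``use cyclotomy of order $8$ and the Goethals--Seidel array'' is not a proof of Yamada's theorem.

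Beyond incompleteness, the way you propose to use the hypotheses does not fit together. In (a), the Goethals--Seidel array takes four circulant (or type-one) matrices of a \emph{single} common order $q$ and yields an Hadamard matrix of order $4q$ only if $\sum_j A_jA_j^T=4qI_q$ exactly; there is no slot in the array for four additional $\pm1$ sequences of length $(q-1)/2$, and aperiodic autocorrelations of objects of length $(q-1)/2$ cannot ``cancel'' a $\gamma J_q$ term sitting inside circulants of order $q$ --- the sizes are simply incompatible. The hypothesis is an Hadamard \emph{matrix} of order $(q-1)/2$, not a quadruple of sequences, and in the known constructions it enters as a block factor indexed by the $(q-1)/2$ cosets of $\{\pm1\}$ in the multiplicative group of $GF(q)$, which is also why $q\equiv1\pmod 8$ is needed (so that $(q-1)/2\equiv0\pmod 4$, a necessary condition for such a matrix to exist at all) --- not merely to ``tune $\gamma$.'' In (b) the mismatch is worse: a symmetric conference matrix of order $(q+3)/2$ cannot serve as a two-row/two-column ``border'' promoting order-$q$ blocks to order $q+2$, and you give no mechanism by which the target order $4(q+2)$ arises from data living on $GF(q)$. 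To make this a proof you would need either to reproduce Yamada's actual construction (via relative Gauss sums and the Kiyasu-type arrays) or to supply a complete explicit alternative, including the choice of the $D_i$ and the verification of the resulting matrix equations; the present text contains neither.
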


Let us also recall that a {\em skew Hadamard matrix} is a
Hadamard matrix $H$ (of order $n$ say) such that $H-I_n$ is a 
skew-symmetric matrix.

\begin{theorem} \rm{( Yamada \cite{MY} )} If $q\equiv 5\pmod{8}$
is a prime power and there exists a skew Hadamard matrix of order 
$(q+3)/2$, then there exists an Hadamard matrix of order $4(q+2)$.
\end{theorem}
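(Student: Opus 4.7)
The plan is to adapt Yamada's quadratic-residue construction, using the skew Hadamard matrix of order $(q+3)/2$ as a bordering template, to build four $\pm 1$ matrices of order $q+2$ whose Gram sum equals $4(q+2)I$; inserting them into a Goethals--Seidel array will then produce the required Hadamard matrix of order $4(q+2)$.

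First I would set up the Jacobsthal matrix $Q$ of order $q$ by $Q_{xy}=\chi(x-y)$, where $\chi$ is the quadratic character of $GF(q)$. Since $q\equiv 5\pmod 8$, one has $\chi(-1)=1$ (so $Q$ is symmetric), $\chi(2)=-1$, and the standard identities $QJ=JQ=0$ and $QQ^T=qI-J$ hold. The four candidate matrices $A,B,C,D$ would be obtained by taking $Q$ together with three sign-flipped variants, corresponding to the four cosets of the quartic residues in $GF(q)^{\times}$ (available because $q-1\equiv 4\pmod 8$), and then bordering each of them with two extra rows and two extra columns. The entries of the border, and in particular a $2\times 2$ ``corner'' block, are prescribed by the entries of the skew Hadamard matrix $H$ of order $(q+3)/2$ (normalised so that $H+H^T=2I$), after a natural partition of the ambient index set into $(q+3)/2$ parts using the involution $x\mapsto -x$ on $GF(q)$ together with the two points at infinity.

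The main obstacle is the verification of the Gram identity $AA^T+BB^T+CC^T+DD^T=4(q+2)I_{q+2}$, which I would split according to the block structure: (i) on the $q\times q$ core the four Jacobsthal contributions sum by $QQ^T=qI-J$, producing $4(qI-J)$; (ii) the border rows and columns contribute exactly $+4J$ on the core plus a constant on the diagonal, bringing the core total to $4(q+2)I_q$; and (iii) on the $2\times 2$ corner block the sum collapses via $HH^T=\tfrac{q+3}{2}I$ and the skew-symmetric splitting $H+H^T=2I$ to $4(q+2)I_2$ with no off-diagonal remainder. The delicate point is (iii): the hypothesis $q\equiv 5\pmod 8$ (rather than merely $q\equiv 1\pmod 4$) forces $\chi(2)=-1$, and this parity is precisely what makes a \emph{skew} auxiliary $H$ compatible with the character-sum cancellations at the corner; in the companion case $q\equiv 1\pmod 8$ the same role is played by a \emph{symmetric} conference matrix of order $(q+3)/2$, which is exactly the content of Theorem~2.2(b). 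Once the Gram identity is established, the conclusion follows by plugging $A,B,C,D$ into the standard Goethals--Seidel array.
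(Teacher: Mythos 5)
The paper does not prove this statement at all: it is quoted verbatim from Yamada \cite{MY} (with credit to Kiyasu) and used as a black box, so there is no internal argument to compare yours against. Judged on its own terms, your proposal is a plausible-sounding outline rather than a proof, and it has concrete gaps. First, the four matrices $A,B,C,D$ are never actually defined: ``$Q$ together with three sign-flipped variants'' bordered by rows and columns ``prescribed by'' $H$ specifies nothing checkable, and the entire burden of the theorem lies in those definitions. Second, your index-set bookkeeping does not close: the involution $x\mapsto -x$ on $GF(q)$ has one fixed point and $(q-1)/2$ two-element orbits, so adjoining two points at infinity gives $(q+5)/2$ classes, not $(q+3)/2$; and it is not meaningful to say that the identity $HH^T=\frac{q+3}{2}I$, which lives in order $(q+3)/2$, ``collapses'' a $2\times 2$ corner computation. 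Third, in the Gram verification you omit the mixed (core--border) blocks entirely, and step (ii) as stated is inconsistent: eight border columns contribute $\sum w_iw_i^T$ with diagonal $8$, so one cannot get ``exactly $+4J$'' there without the diagonal corrections you would also need from replacing the zero diagonal of $Q$ --- none of which is tracked. Finally, a Goethals--Seidel array requires more than the Gram identity $AA^T+BB^T+CC^T+DD^T=4(q+2)I$: the four matrices must be circulant/type~1 (or at least pairwise amicable), and bordered matrices are not, so this compatibility condition must be built in and verified; you never address it.

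For what it is worth, Yamada's actual construction does not border a Jacobsthal matrix over $GF(q)$. It works with $GF(q^2)$ and relative Gauss sums, producing matrices indexed essentially by the $(q+1)$-element quotient $GF(q^2)^{\times}/GF(q)^{\times}$ plus an extra point, with the auxiliary skew Hadamard (resp.\ symmetric conference) matrix of order $(q+3)/2$ acting on the orbit space of an involution on that set; the dichotomy $q\equiv 1$ versus $5\pmod 8$ then governs which symmetry type of auxiliary matrix is compatible, which is the one point where your intuition matches the real mechanism. If you want to use this theorem, cite \cite{MY}; if you want to prove it, you must write down the matrices explicitly and verify both the Gram identity on every block and the amicability conditions.
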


(For this theorem and part (b) of the previous one, Yamada gives 
credit to Z. Kiyasu.)

We also need two results of Miyamoto. The first one is the
following theorem.

\begin{theorem} \rm{( Miyamoto \cite{MM} )} If $q\equiv 1\pmod{4}$
is a prime power and there exists an Hadamard matrix of order $q-1$, 
then there exists an Hadamard matrix of order $4q$.
\end{theorem}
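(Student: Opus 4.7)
The plan is to extract, from the hypothesized Hadamard matrix $H$ of order $q-1$ together with the Paley-type structure of $\mathbb{F}_q$, four $\pm 1$ matrices $A,B,C,D$ of order $q$ satisfying
\[
AA^T+BB^T+CC^T+DD^T=4q\,I_q
\]
along with the commutativity/amicability conditions that allow them to be placed in a Goethals--Seidel-type array; the array will then produce the desired Hadamard matrix of order $4q$. The assumption $q\equiv 1\pmod{4}$ is essential in two ways: it makes the Jacobsthal matrix $S$ over $\mathbb{F}_q$, defined by $S_{ij}=\chi(j-i)$ for the quadratic character $\chi$, symmetric with $SS^T=qI_q-J_q$ and $SJ_q=0$, and it makes $(q-1)/2$ even, so the partition $\mathbb{F}_q^{*}=Q\cup N$ into nonzero squares and non-squares has blocks of even size and is stable under negation.

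First, I would identify the row and column index set of $H$ with $\mathbb{F}_q^{*}=Q\cup N$, and border $H$ by a single extra row and column indexed by $0\in\mathbb{F}_q$ with suitably chosen $\pm 1$ entries, producing two signed bordered variants $H^{(1)},H^{(2)}$ of order $q$. Second, from the Jacobsthal matrix I would take the two $\pm 1$ matrices $S+I_q$ and $S-I_q$. Third, I would pair these four building blocks into the quadruple $(A,B,C,D)$, aligning the signs on the borderings and on the $I_q$-shifts so that the Hadamard identity $HH^T=(q-1)I_{q-1}$ combines with $SS^T=qI_q-J_q$ to give exactly $4q\,I_q$ on the diagonal, while off-diagonal contributions cancel.

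The main obstacle is verifying precisely this cancellation. The off-diagonal terms in $\sum_X XX^T$ couple the additive structure of $\mathbb{F}_q$ (through $S$) with the multiplicative partition $Q\cup N$ (through the re-indexing of $H$), and making them vanish rests on the classical Gauss-period identity: for each fixed $c\in\mathbb{F}_q^{*}$ the number of pairs $(a,b)\in Q\times Q$ with $a+b=c$ depends only on whether $c\in Q$ or $c\in N$. Once this identity is in hand, checking that the chosen signs produce a quadruple compatible with the Goethals--Seidel array is a finite bookkeeping exercise, and assembling $(A,B,C,D)$ into that array completes the construction of the Hadamard matrix of order $4q$.
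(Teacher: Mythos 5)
First, a caveat: the paper does not prove this theorem; it quotes it from Miyamoto \cite{MM} (with a proof available in \cite{SY1,SY2}), so there is no internal proof to compare against. Judged on its own terms, your sketch has the right general shape (combine an Hadamard matrix of order $q-1$ with the quadratic-residue structure of $\mathbb{F}_q$ to get four order-$q$ matrices and plug them into an array), but the specific construction you propose cannot be made to work, and the decisive step is exactly the one you defer. The two Jacobsthal blocks contribute $(S+I)(S+I)^T+(S-I)(S-I)^T=2qI_q+2I_q-2J_q$, so the two bordered copies of $H$ must contribute $2qI_q+2(J_q-I_q)$. For a bordered matrix $K$ with border row $b$ and border column $c$, the core off-diagonal entries of $KK^T-qI_q$ are $b_xb_y$, and $b^{(1)}_xb^{(1)}_y+b^{(2)}_xb^{(2)}_y=2$ for all $x\neq y$ forces both borders to be constant; but then the border-versus-core entries are $\pm 1+\langle c,h_y\rangle$ where $h_y$ runs over the rows of $H$, and since these are mutually orthogonal $\pm1$ vectors of length $q-1$ no single $\pm1$ vector $c$ can make all these inner products equal to the required constant (normalize $H$: the values are $0$ for all but one row). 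So no choice of signs on the borders produces $\sum XX^T=4qI_q$; this is not a ``finite bookkeeping exercise'' but an obstruction. A second, independent gap: amicability (or Goethals--Seidel compatibility) of the bordered $H$ with $S\pm I$ is asserted without any mechanism. The GS array requires circulant or type-1 blocks for the back-diagonal trick, and a Williamson-type array requires pairwise amicability; an arbitrary Hadamard matrix $H$ of order $q-1$ carries no group-invariance, so merely relabelling its index set by $\mathbb{F}_q^*$ gives no reason for either property to hold.

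The underlying problem is that your argument never genuinely couples the Hadamard property of $H$ to the field structure: you use $HH^T=(q-1)I$ and $SS^T=qI_q-J_q$ as separate black boxes, and the Gauss-period identity you invoke concerns only the Paley half of the construction. Miyamoto's actual proof is more intricate: he indexes a \emph{normalized} $H$ by $\mathbb{Z}_{q-1}\cong\mathbb{F}_q^*$ via a primitive root, splits it into blocks according to the parity of the discrete logarithm (i.e., residues versus non-residues), and builds $\{0,\pm1\}$ matrices $U_i,V_i$ of order $q$ whose supports are the residue and non-residue difference patterns and whose nonzero entries come from the blocks of $H$; the orthogonality conditions are verified modulo 2 and the final Hadamard matrix of order $4q$ is assembled through an M-structure ($2\times2$ block) array, not the plain Williamson or Goethals--Seidel array. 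If you want to complete a proof along your lines, you would need to replace the naive bordering by some such interleaving of $H$ with the character values before any sign bookkeeping can succeed.
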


For the second we need to recall the definition of 
Williamson-type matrices. Two matrices $A,B$ of order $n$ are 
{\em amicable} if $AB^T=BA^T$. Four $\{+1,-1\}$-matrices 
$A,B,C,D$ of order $n$ are called {\em Williamson-type matrices} 
if they are pairwise amicable and satisfy
$$ AA^T+BB^T+CC^T+DD^T=4nI_n. $$

We quote the second result of Miyamoto from the presentation
provided by Seberry and Yamada \cite[Corollary 8.8, part 1]{SY2}
or \cite[Corollary 29, part (i)]{SY1} where the proof is 
also given.

\begin{theorem}  
Let $q\equiv 1\pmod{4}$ be a prime power. Then there exist 
Williamson-type matrices of order $q$ if there are 
Williamson-type matrices of order $(q-1)/4$ or an Hadamard 
matrix of order $(q-1)/2$.
\end{theorem}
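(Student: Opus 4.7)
The plan is to construct the four Williamson-type matrices directly on the additive group of $\mathbb{F}_q$ using the Paley/cyclotomic structure provided by the condition $q\equiv 1\pmod 4$. Write $q=4t+1$ and let $C_0,C_1$ denote the nonzero quadratic residues and non-residues of $\mathbb{F}_q$. Since $-1$ is a square (as $q\equiv 1\pmod 4$), both $C_0$ and $C_1$ are closed under negation, and the classical Paley identities give that $C_0$ and $C_1$ are supplementary difference sets with the usual parameters. I will take $A,B,C,D$ to be type-1 matrices of order $q$, that is $A_{ij}=a(j-i)$ for an even function $a\colon \mathbb{F}_q\to\{\pm 1\}$, and similarly for $b,c,d$. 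Evenness makes each of $A,B,C,D$ symmetric, so pairwise amicability $XY^T=YX^T$ reduces to commutativity $XY=YX$, which is automatic for type-1 matrices indexed by the same abelian group. The whole problem thus collapses to choosing $a,b,c,d$ so that
\[
\sum_{y\in\mathbb{F}_q}\bigl(a(y)a(y+s)+b(y)b(y+s)+c(y)c(y+s)+d(y)d(y+s)\bigr)=0
\]
for every $s\in\mathbb{F}_q^*$.

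For part (a), when a Hadamard matrix $H$ of order $2t=(q-1)/2$ is available, I would fix a bijection between the columns of $H$ and the elements of $C_0$, then select four rows of $H$ to specify the sign patterns of $a,b,c,d$ on $C_0$, and propagate the values to $C_1$ by a fixed involution (multiplication by a chosen non-residue) together with an appropriate sign twist, so that the resulting functions remain even. Orthogonality of the rows of $H$ is then exactly what kills the "same-coset" portion of the autocorrelation sum (where both $y$ and $y+s$ lie in $C_0$, respectively both in $C_1$), while the Paley supplementary-difference-set identity accounts for the "cross-coset" contributions; the values at $0$ are set to absorb the residual constants.

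For the alternative hypothesis of Williamson-type matrices of order $t=(q-1)/4$, I would exploit the fact that $\mathbb{F}_q^*$ is cyclic of order $4t$ and hence contains a unique subgroup of index $4$, namely the biquadratic residues, whose four cosets partition $\mathbb{F}_q^*$. The given Williamson-type matrices of order $t$ live naturally on the cyclic group of order $t$, and I would transplant them onto these four cosets. The Williamson relation at order $t$ handles the portion of the autocorrelation coming from shifts $s$ lying inside the subgroup, and the cyclotomic numbers of order $4$ for $\mathbb{F}_q$ (which are well-defined because $q\equiv 1\pmod 4$) control the mixed cross-coset contributions.

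The main obstacle in either case is the cross-coset bookkeeping: one has to verify that the character-sum/cyclotomic-number coefficients combine with the selected sign patterns to cancel the off-diagonal autocorrelations exactly. This is the content of the Paley--Szekeres style identities for $q\equiv 1\pmod 4$, and its verification is essentially a routine but lengthy calculation once the assignment of signs to the cosets is made correctly. Granting that, the pieces assemble into symmetric type-1 Williamson-type matrices of order $q$, proving the theorem. For an explicit presentation with all the constants computed, I would follow \cite[Corollary 29(i)]{SY1} or \cite[Corollary 8.8]{SY2}.
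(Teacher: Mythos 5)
This theorem is one the paper does not prove at all: it is quoted verbatim from Seberry--Yamada (\cite[Corollary 8.8, part 1]{SY2}, \cite[Corollary 29, part (i)]{SY1}), and the author explicitly points the reader there for the proof. So your text has to stand on its own, and as written it does not: it is a plan rather than a proof. The decisive step --- showing that the off-peak autocorrelations $\sum_y\bigl(a(y)a(y+s)+\cdots+d(y)d(y+s)\bigr)$ vanish for all $s\neq 0$ --- is exactly the content of the theorem, and you defer it entirely (``Granting that, the pieces assemble\dots''), ultimately falling back on the same two references the paper cites. Nothing in the proposal actually establishes the cancellation.

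Beyond the deferral, several of the structural claims in the sketch are not correct as stated. First, row-orthogonality of $H$ does not ``kill the same-coset portion of the autocorrelation'': the quantity $\sum_{y,\,y+s\in C_0} a(y)a(y+s)$ is a \emph{shifted self}-correlation of a single row of $H$ under the pairing of columns induced by translation by $s$ (which does not even preserve $C_0$); it is not an inner product of two distinct rows, so orthogonality of rows is simply not the relevant property. The distribution of the pairs $(y,y+s)$ over the cosets is governed by cyclotomic numbers, and one must track how the chosen sign patterns interact with that distribution --- this is where the real work lies. Second, multiplication by a fixed non-residue is a bijection $C_0\to C_1$ but not an involution (its square is multiplication by a residue), so your propagation rule is ill-defined. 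Third, evenness of $a$ forces the selected rows of $H$ to be constant on the $\{x,-x\}$ pairs of $C_0$ under your chosen bijection; this is an additional constraint on $H$ that you neither impose nor show can be met. The actual construction (Miyamoto's M-structure argument as presented in \cite{SY1,SY2}) organizes the data quite differently, precisely to make these cancellations verifiable; to repair your write-up you would need either to carry out the cyclotomic bookkeeping in full or to reproduce that argument.
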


The new result that we need is obtained by means of a computer.
Namely, we have constructed the first example of base
sequences $BS(40,39)$. A well-known construction then gives us
T-sequences of length 79, and also the orthogonal 
design $OD(4d)$ with $d=79$. This result will be used in the
proof of Proposition \ref{Stav-4} in the next section.
We recall that we have constructed in \cite{DZ8} T-sequences 
of length 73. Hence (see e.g., \cite[Remark 8.47]{HCD}) 
T-sequences $TS(n)$ of length $n\le100$ all exist except 
possibly for $n=97$.

The base sequences $(A;B;C;D)\in BS(40,39)$ that we have found 
are given in encoded form by
$$ [06582 25343 22457 18723, 11644 42638 46572 23422]. $$
The encoding scheme is explained in \cite{DZ7}.
For the reader's convenience we also give these sequences
explicitly by writing $+$ for $+1$ and $-$ for $-1$:

\begin{eqnarray*}
A &=& ++--+ +--+- +++-- +--+- \\
&& ++--+ -+-++ +-+++ +-+-+; \\
B &=& +++-- -+--- ---++ +-+-- \\
&& +-+-+ +-+-- +++-- -----; \\
C &=& +++++ +++-- ++--+ +-++ - \\
&& +-++ +-+-- -+-+- ---++; \\
D &=& +++-- --+-- -+++- ---- + \\
&& -++- -+--+ -+--+ ++-++. \\
\end{eqnarray*}

The Base Sequence Conjecture (BSC) asserts that all 
$BS(n+1,n)$ exist, i.e., are nonvoid. Due to the above example, 
we can now update its status (see \cite{DZ7}): BSC  has been 
verified for all $n\le40$ and is also known to be valid for all 
Golay numbers $n=2^a10^b26^c$ ($a,b,c$ nonnegative integers).

\section{Existence of some Hadamard matrices}

For convenience, we split the proof into four propositions.
We consider first the prime integers $n$.

\begin{proposition} \label{Stav-1}
For each of the primes
$$ n=787,823,883,1063,1303,2143,3533,5441,5449,8237,8573 $$
there exists an Hadamard matrix of order $4n$.
\end {proposition}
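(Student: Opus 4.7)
The plan is to dispatch the eleven primes separately, applying in each case one of the theorems collected in Section 2. Sorting the primes by residue modulo $8$ already suggests the natural tool: Mathon (Theorem 2.1) when the arithmetic of $2n - 1 = q^2(q+2)$ works out, Yamada's Theorem 2.2 for $n \equiv 1 \pmod{8}$, Yamada's Theorem 2.3 (with $q = n - 2$) for $n \equiv 7 \pmod{8}$, and Miyamoto's Theorem 2.4 for $n \equiv 1 \pmod{4}$.

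For $n = 787$, since $2n - 1 = 1573 = 11^2 \cdot 13$, Mathon's theorem with $q = 11$ (prime, $\equiv 3 \pmod{4}$, and $q + 2 = 13$ prime) immediately yields a symmetric Hadamard matrix of order $2q^2(q+2) + 2 = 3148 = 4n$. For $n = 883$, I would take $q = 881$, which is prime and $\equiv 1 \pmod{8}$, and apply Theorem 2.2(b); this reduces the task to exhibiting a symmetric conference matrix of order $(q+3)/2 = 442$, which is itself produced by Mathon's theorem with $q = 7$ (so that $q^2(q+2) + 1 = 49 \cdot 9 + 1 = 442$).

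For each $n \in \{823, 1063, 1303, 2143\}$, the integer $q := n - 2$ is in every case a prime $\equiv 5 \pmod{8}$, so Theorem 2.3 reduces the construction of a Hadamard matrix of order $4n$ to the existence of a skew Hadamard matrix of order $(q+3)/2 \in \{412, 532, 652, 1072\}$, each of which I would cite from the standard skew-Hadamard literature. For $n \in \{5441, 5449\}$, both of which are primes $\equiv 1 \pmod{8}$, Theorem 2.2(a) reduces the construction to a Hadamard matrix of order $(n-1)/2 \in \{2720, 2724\}$, both standard.

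For the remaining three primes $n \in \{3533, 8237, 8573\}$, all $\equiv 5 \pmod{8}$ and hence $\equiv 1 \pmod{4}$, I would apply Theorem 2.4 with $q = n$, reducing to a Hadamard matrix of order $n - 1$, namely $3532 = 4 \cdot 883$, $8236$, and $8572 = 4 \cdot 2143$ respectively. Strikingly, the first and third of these are precisely the Hadamard orders $4n$ already produced by the earlier cases $n = 883$ and $n = 2143$, so I must order the eleven cases so that $883$ precedes $3533$ and $2143$ precedes $8573$; the matrix of order $8236$ I would cite from the standard tables. The main obstacle is not the logical skeleton, which is essentially forced by the theorems of Section 2, but the bookkeeping required to locate accessible references for the auxiliary skew Hadamard and Hadamard matrices; the internal recursion $883 \to 3533$, $2143 \to 8573$ is unproblematic once the primes are treated in the correct order.
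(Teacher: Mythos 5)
Your case-by-case assignment of the eleven primes to Mathon's theorem ($n=787$), Yamada 2.2(a) ($n=5441,5449$), Yamada 2.2(b) via $q=881$ and a conference matrix of order $442$ ($n=883$), Yamada 2.3 with $q=n-2$ ($n=823,1063,1303,2143$), and Miyamoto 2.4 with the recursion through $883$ and $2143$ ($n=3533,8237,8573$) is exactly the paper's proof; the only cosmetic difference is that for the auxiliary Hadamard matrix of order $8236=4\cdot 2059$ the paper spells out the construction from $2059=29\cdot 71$ (Williamson matrices of order $29$ plus T-sequences of length $71$) rather than citing tables. The proposal is correct and essentially identical in approach.
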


\begin{proof}
The case $n=787$ is an instance of Mathon's theorem.
Indeed for $q=11$ we have $q^2(q+2)+1=1574=2\cdot 787$.

In the cases $n=5441,5449$ we apply part (a) of the first 
Yamada theorem with $q=n$. The existence of required Hadamard
matrices of order $(q-1)/2$ has been known for long time
(see e.g. \cite{SY2}).

In the case $n=883$ we use part (b) of the first Yamada theorem
with $q=n-2=881$ (a prime). Then $(q+3)/2=442$ and, by 
Mathon's theorem, there exists a symmetric conference matrix 
of order $442$.

In the cases $n=823,1063,1303,2143$ we apply the second Yamada
theorem with $q=n-2$. The required skew Hadamard matrices 
of order $(q+3)/2=4\cdot103,4\cdot133,4\cdot163,2^4\cdot67$
exist (see \cite{DZ1,DZ2,DZ4}).

In the remaining three cases $n=3533,8237,8573$ these primes 
are $\equiv 5\pmod{8}$ and we can apply the first Miyamoto 
theorem with $q=n$. This theorem requires $q-1=4d$ to be the 
order of an Hadamard matrix. If $q=3533,8573$ we have 
$d=883,2143$. Both of these cases have been already handled 
in the previous paragraphs. In the remaining case $8237$ 
we have $d=2059$. We can easily handle this case since 
$2059=29\cdot71$ and we know that there exist Williamson 
matrices of order 29 as well as T-sequences of length 71 
(e.g., see \cite{SY2} and \cite{KS} or \cite{DZ7}). This 
implies the existence of an Hadamard matrix of order 
$4d$ (see \cite{DZ6}).
\end{proof}

(The first case, $n=787$, could have been identified as 
good not only in \cite[Table 1.53]{HCD} but also in \cite{SY2}.)

\begin{proposition} \label{Stav-2}
For each of the numbers
\begin{eqnarray*}
n &=& 2571,6819,7179,7251,7323,7779,8067,8139, \\
&& 8187,8259,8499,8859,9087,9123,9939
\end{eqnarray*}
there exists an Hadamard matrix of order $4n$.
\end {proposition}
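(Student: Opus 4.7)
The plan is to produce Williamson-type matrices of order $n$ for each listed value, from which a Hadamard matrix of order $4n$ follows immediately by substitution into the Williamson array.

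I would first observe that every one of the fifteen values in the list factors as $n = 3p$ with $p$ an odd prime, with the single exception $n = 9087 = 3 \cdot 13 \cdot 233$; moreover, in every case each prime factor other than $3$ is $\equiv 1 \pmod{8}$. Consequently $(p-1)/2$ is a multiple of $4$ of moderate size, and each of the resulting orders (for example $428, 1136, 1196, \ldots, 1656$, plus the small value $116$ arising from $p = 233$) is listed as a Hadamard order in the standard references \cite{SY2, HCD}. Invoking Miyamoto's second theorem (Theorem 1.5) with $q = p$ then produces Williamson-type matrices of order $p$ for each prime factor in play.

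Next, the classical Williamson matrices of order $3$ (and of $13$, in the exceptional case) are symmetric circulants, so they commute pairwise. The standard Williamson product construction---which requires one of the two four-tuples to consist of pairwise commuting symmetric circulants---combines them with the Williamson-type matrices of order $p$ obtained above to produce Williamson-type matrices of order $3p$, or of $3 \cdot 13 \cdot 233 = 9087$ in the exceptional case. Substituting into the Williamson array finishes the construction.

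The main obstacle is the case $n = 9087$: the coarse factorization $3 \cdot 3029$ does not work, because $3029 \equiv 5 \pmod 8$ forces $(3029-1)/2 = 1514 \equiv 2 \pmod 4$, which cannot be a Hadamard order, while the alternative Miyamoto hypothesis would demand Williamson-type matrices of order $757$---again a prime $\equiv 5 \pmod 8$ that offers no convenient further reduction. The way out is to notice the finer factorization $3029 = 13 \cdot 233$, whose prime factors are both $\equiv 1 \pmod 8$ so that Miyamoto's theorem applies cleanly at each, and then to feed the classical order-$13$ Williamson matrices into the Williamson product.
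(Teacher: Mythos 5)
Your reduction of each prime $q\equiv 1\pmod 8$ to Williamson-type matrices of order $q$ via Miyamoto's theorem (after checking that $(q-1)/2=4m$ is a Hadamard order, with $428$ supplied by Kharaghani--Tayfeh-Rezaie \cite{KT} rather than by \cite{SY2}) agrees with the paper. The gap is in how you assemble the pieces. You invoke a ``standard Williamson product construction'' that turns Williamson matrices of order $3$ (or $13$) together with Williamson-type matrices of order $p$ into Williamson-type matrices of order $3p$ (or $3\cdot 13\cdot 233$). No such general multiplication theorem for Williamson-type matrices is available: none of the references in play contains one, and if it existed it would, for instance, produce Williamson-type matrices of order $35$ from the classical symmetric circulant ones of orders $5$ and $7$ --- an order for which no Williamson-type matrices are known (and for which circulant symmetric Williamson matrices are known not to exist). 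You would need to exhibit the construction, not merely name it. The standard, available glue is different: an orthogonal design $OD(4t;t,t,t,t)$ combined with Williamson-type matrices of order $w$ yields a Hadamard matrix of order $4tw$. The paper takes $t=3$ (the $OD(12;3,3,3,3)$ exists because T-sequences of length $3$ do) and $w=q$, giving order $12q=4n$ directly, with no need for Williamson-type matrices of order $n$ itself.

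Two smaller points. Your claim that every prime factor other than $3$ is $\equiv 1\pmod 8$ fails for $13$ (which is $\equiv 5$), though you sidestep this by falling back on the classical order-$13$ matrices. More importantly, you were right to flag $9087=3\cdot 13\cdot 233$ as not fitting the pattern: it is in fact a misprint for $9627=3\cdot 3209$ (compare the list of $42$ integers in Section 2 and the values $q=3209$, $m=401$ in the paper's proof), and $3209$ is a prime $\equiv 1\pmod 8$ handled exactly like the others. Should one actually want $9087$, the toolkit still suffices without any Williamson product: write $9087=39\cdot 233$, use the $OD(4\cdot 39)$ coming from T-sequences of length $39$, and feed in the Williamson-type matrices of order $233$ that Miyamoto's theorem already provides.
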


\begin{proof} 
Note that in all cases we have $n=3q$ where
\begin{eqnarray*}
q &=& 857,2273,2393,2417,2441,2593,2689,2713,2729,2753, \\
&& 2833,2953,3041,3209,3313
\end{eqnarray*}
is a prime $\equiv 1\pmod{4}$. Since the orthogonal design 
$OD(4d)$ exists for $d=3$, it suffices to show that, for each 
of the 15 values of $q$, there exist Williamson-type matrices 
of order $q$. This can be deduced from the second Miyamoto 
theorem. Indeed, it suffices to verify that there exists an 
Hadamard matrix of order $(q-1)/2=4m$ where
$$ m=107,284,299,302,305,324,336,339,341,344,354,369,
380,401,414. $$
For $m=107$ see \cite{KT} and for all other see \cite{SY2}.
\end{proof}

It is clear from this proof that all of these cases but the 
first could have been recorded in \cite[Table 1.53]{HCD}.

\begin{proposition} \label{Stav-3}
For each $n=1527,7167,8079,8751,9111$ there exists an Hadamard 
matrix of order $4n$.
\end {proposition}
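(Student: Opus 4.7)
The approach is identical to that of Proposition \ref{Stav-2}. Each listed integer factors as $n=3q$ with $q$ a prime satisfying $q\equiv 5\pmod{8}$, namely
$$ q = 509,\ 2389,\ 2693,\ 2917,\ 3037. $$
Since $OD(12)=OD(12;3,3,3,3)$ exists, it is enough to produce Williamson-type matrices of order $q$ in each case; plugging them into $OD(12)$ in the standard way then yields an Hadamard matrix of order $12q=4n$.

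To construct Williamson-type matrices of order $q$, I would invoke the second Miyamoto theorem. Since $q\equiv 5\pmod 8$ we have $(q-1)/2\equiv 2\pmod 4$, so no Hadamard matrix of order $(q-1)/2$ can exist and the Hadamard branch of the theorem is unavailable. Consequently one is reduced to verifying the other hypothesis, the existence of Williamson-type matrices of order $(q-1)/4$, namely of orders
$$ (q-1)/4 = 127,\ 597,\ 673,\ 729,\ 759. $$

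The proof therefore reduces to exhibiting Williamson-type matrices of each of these five orders. The orders $673$ and $729=3^6$ are prime powers $\equiv 1\pmod 8$, and here Miyamoto's theorem can be iterated once more: its Hadamard branch is now available and reduces the question to the known existence of Hadamard matrices of orders $(673-1)/2=336$ and $(729-1)/2=364$. For the remaining three orders $127$, $597=3\cdot 199$, and $759=3\cdot 11\cdot 23$, one would cite Williamson-type constructions collected in \cite{SY2} and \cite{HCD} (possibly through computer-aided references such as \cite{KS}); the composite orders $597$ and $759$ are reduced to their small prime factors via the standard composition of Williamson matrices.

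The principal obstacle lies with the prime orders $127$ and $199$ (the latter being a factor of $597$), both of which are $\equiv 3\pmod 4$ and therefore outside the scope of Miyamoto's theorem. Once Williamson(-type) matrices of these two orders are imported from the literature, the remainder becomes a routine chaining of the tools assembled in Section~2.
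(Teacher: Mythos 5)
Your proof follows the paper's argument exactly: write $n=3q$ with $q=509,2389,2693,2917,3037$, use the orthogonal design $OD(4d)$ for $d=3$, and apply the second Miyamoto theorem to reduce the problem to Williamson-type matrices of orders $(q-1)/4=127,597,673,729,759$ (your observation that the Hadamard branch is unavailable because $(q-1)/2\equiv 2\pmod 4$ is correct and implicit in the paper). The only divergence is in how the five base cases are settled: the paper cites \cite{DZ3} for order $127$ and \cite{SY2} for the remaining four orders, rather than iterating Miyamoto for $673$ and $729$ or decomposing $597$ and $759$ into prime factors as you sketch.
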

\begin{proof}
We have $n=3q$ where $q=509,2389,2693,2917,3037$.
Again we use the $OD(4d)$ for $d=3$ and our task is to show 
that there exist Williamson-type matrices of order $q$. 
As each of these $q$ is a prime $\equiv 1\pmod{4}$, 
we can use again the second Miyamoto theorem. 
This time we verify that there exist Williamson-type matrices 
of order $(q-1)/4=127,597,673,729,759.$
For $127$ see \cite{DZ3} and for all other see \cite{SY2}.
\end{proof}

\begin{proposition} \label{Stav-4}
For each of the numbers
$$n=2335,2545,4285,5999,6181,6617,7663,8611,8653,9427,9671$$
there exists an Hadamard matrix of order $4n$.
\end {proposition}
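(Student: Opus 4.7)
The plan is to reduce each of the eleven values to a product $n = dm$ such that both an orthogonal design $OD(4d)$ and four Williamson-type matrices of order $m$ are available; the standard substitution of the Williamson-type matrices into the four variables of the design then yields an Hadamard matrix of order $4dm = 4n$.

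The factorizations I would use are
\begin{align*}
2335 &= 5 \cdot 467, & 2545 &= 5 \cdot 509, & 4285 &= 5 \cdot 857, \\
5999 &= 7 \cdot 857, & 6181 &= 7 \cdot 883, & 6617 &= 13 \cdot 509, \\
7663 &= 79 \cdot 97, & 8611 &= 79 \cdot 109, & 8653 &= 17 \cdot 509, \\
9427 &= 11 \cdot 857, & 9671 &= 19 \cdot 509.
\end{align*}
For the first factor, $OD(4d)$ with $d \in \{5, 7, 11, 13, 17, 19\}$ is classical and follows from the short T-sequences of these lengths. The case $d = 79$ is the key new input: it uses the $OD(4 \cdot 79)$ produced in the previous section from the base sequences $BS(40, 39)$. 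Thus the two cases $n = 7663$ and $n = 8611$ are precisely where the new result of this paper is needed.

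For the second factor $m$, the primes $97, 109, 509, 857$ are all $\equiv 1 \pmod{4}$. Williamson-type matrices of orders $509$ and $857$ have already been produced in the proofs of Propositions \ref{Stav-2} and \ref{Stav-3} (via the second Miyamoto theorem), and those of orders $97$ and $109$ can be obtained in the same way: $(97-1)/2 = 48$ is the order of a classical Hadamard matrix, and Williamson-type matrices of order $(109-1)/4 = 27$ are classical.

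The main obstacle lies in the two cases $n = 2335$ and $n = 6181$, requiring Williamson-type matrices of orders $467$ and $883$ respectively. Both are primes $\equiv 3 \pmod{4}$, so Miyamoto's theorem does not apply, and one must fall back on specialized constructions (for example cyclotomic constructions, Whiteman-type supplementary difference sets with the appropriate amicability, or a direct computer search); this is where the bulk of the work lies.
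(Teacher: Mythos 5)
Your reduction $n=dm$ with $OD(4d)$ plus Williamson-type matrices of order $m$ is exactly the paper's strategy for nine of the eleven numbers, and you correctly single out $7663=79\cdot 97$ and $8611=79\cdot 109$ as the cases that need the new $OD(4\cdot 79)$ obtained from $BS(40,39)$. But the two cases you leave open are genuine gaps, and the paper closes them by other means than the ad hoc searches you gesture at. For $n=2335$ the paper does not factor at all: it applies the second Yamada theorem with $q=2333$, a prime $\equiv 5\pmod 8$, so that $q+2=2335$; the required hypothesis is a skew Hadamard matrix of order $(q+3)/2=2^4\cdot 73=1168$, which comes from Spence's infinite series of skew Hadamard matrices (containing one of order $4\cdot 73$). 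Your factorization $2335=5\cdot 467$ dead-ends exactly as you fear, since $467\equiv 3\pmod 4$ and no Williamson-type matrices of that order are available here.

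For $n=6181=7\cdot 883$ the paper does keep your factorization, and the missing ingredient --- Williamson-type matrices of order $883$ --- is supplied by the construction already carried out for the case $n=883$ in the proof of Proposition \ref{Stav-1}: part (b) of the first Yamada theorem with $q=881\equiv 1\pmod 8$, fed by Mathon's symmetric conference matrix of order $(q+3)/2=442$, and that construction furnishes Williamson-type matrices of order $q+2=883$, not merely an Hadamard matrix of order $4\cdot 883$. So $883\equiv 3\pmod 4$ rules out Miyamoto's theorem but not the Yamada--Kiyasu route. A small bookkeeping slip besides: the Williamson-type matrices of order $509$ come from the proof of Proposition \ref{Stav-3} (via $1527=3\cdot 509$) and those of order $857$ from Proposition \ref{Stav-2} (via $2571=3\cdot 857$), the opposite pairing to the one you wrote; the remaining orders $97$ and $109$ are simply cited from the Seberry--Yamada survey rather than rederived.
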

\begin{proof}
For $n=2335$ we shall apply the second Yamada theorem with
$q=2333$, a prime $\equiv 5 \pmod{8}$. We have to verify that 
there exists a skew Hadamard matrix of order 
$(q+3)/2=2^4\cdot73$. This is indeed true because there is 
an infinite series of skew Hadamard matrices constructed by
E. Spence \cite{ES} which contains such a matrix of order 
$4\cdot73$.

Each of the remaining numbers is a product of two distinct 
primes, say $n=dm$ with $d<m$ and $m=97,109,509,857,883$. 
(There are only 5 different $m$.)
Since in all cases $d<97$, we know that
T-sequences of length $d$ exist, and consequently also
the orthogonal design $OD(4d)$ exists. It remains to show
that there exist Williamson-type matrices of order $m$.
For $m=97$ and $m=109$ see \cite{SY2} and for $m=509,857,883$ 
see the proofs of Propositions \ref{Stav-3},\ref{Stav-2} and 
\ref{Stav-1}, respectively.
\end{proof}

\section{Acknowledgments}

The author is grateful to NSERC for the continuing support of
his research. Part of this work was made possible by the facilities 
of the Shared Hierarchical Academic Research Computing Network 
(SHARCNET:www.sharcnet.ca).


\begin{thebibliography}{99}

\providecommand{\bysame}{\leavevmode\hbox to3em{\hrulefill}\thinspace}


\bibitem{HCD}
C.J. Colbourn and J.H. Dinitz, Editors, Handbook of Combinatorial Designs,
2nd edition, Chapman \& Hall, Boca Raton/London/New York, 2007.

\bibitem{DZ1}
D.\v{Z}. \DJo{}, 
Construction of some new Hadamard matrices,
Bull. Austral. Math. Soc. {\bf 45} (1992), 327--332.

\bibitem{DZ2}
\bysame, Ten new orders for Hadamard matrices of skew type,
Univ. Beograd, Publ. Elektrotehn. Fak. Ser. Mat 
{\bf 3} (1992), 47--59.

\bibitem{DZ3}
\bysame, Good matrices of orders 33, 35 and 127 exist,
J. Comb. Math. Comb. Comput. {\bf 14} (1993), 145--152.

\bibitem{DZ4}
\bysame, Five new orders for Hadamard matrices of skew type,
Australasian J. Combin. {\bf 10} (1994), 259--264.

\bibitem{DZ5}
\bysame, Hadamard matrices of order $764$ exist, 
Combinatorica {\bf 28} (4) (2008), 487--489.

\bibitem{DZ6}
\bysame, Hadamard matrices of small order and Yang conjecture, 
J. Combin. Designs {\bf 18} (2010), 254--259.
arXiv:0912.5091v1 [math.CO] 27 Dec 2009.

\bibitem{DZ7}
\bysame, On the base sequence conjecture, 
Discrete Mathematics {\bf 310} (2010), 1956--1964.
arXiv:1002.1414v2 [math.CO] 12 Feb 2010.

\bibitem{DZ8}
\bysame, A new Yang number and consequences, 
Des. Codes Cryptogr. {\bf 54} (2010), 201--204.
arXiv:1007.5434v1 [math.CO] 30 Jul 2010.

\bibitem{KT}
H. Kharaghani and B. Tayfeh-Rezaie, 
A Hadamard matrix of order 428,
J. Combin. Designs {\bf 13} (2005), 435--440.

\bibitem{KS}
S. Kounias and K. Sotirakoglu, Construction of orthogonal sequences,
Proc. 14-th Greek Stat. Conf. 2001, 229--236 (in Greek).

\bibitem{RM}
R. Mathon, Symmetric conference matrices of order $pq^2+1$,
Canad. J. Math. {\bf 30} (1978), 321--331.

\bibitem{MM}
M. Miyamoto, A construction for Hadamard matrices, 
J. Combin. Theory A {\bf 57} (1991), 86--108.

\bibitem{SY1}
J. Seberry and M. Yamada, 
On the products of Hadamard, Williamson and other orthogonal 
matrices using M-structures,
J. Comb. Math. Comb. Comp. {\bf 7} (1990), 97--137.

\bibitem{SY2}
\bysame, Hadamard matrices, sequences and block designs,
in ``Contemporary Design Theory, A Collection of Surveys'',
J.H. Dinitz and D.R. Stinson, Eds., J. Wiley, New York, 1992.

\bibitem{ES}
E. Spence, Skew-Hadamard matrices of Goethals-Seidel type,
Canad. J. Math. {\bf 27} (1975), 555--560.

\bibitem{MY}
M. Yamada, Some new series of Hadamard matrices, 
J. Austral. Math. Soc. A {\bf 46} (1989), 371--383.

\end{thebibliography}
\end{document}